\newcommand{\CC}{\mathbb{C}}
\newcommand{\TT}{\mathbb{T}}
\newcommand{\RR}{\mathbb{R}}
\newcommand{\NN}{\mathbb{N}}
\newcommand{\LL}{\mathcal{L}}
\newcommand{\CP}{\mathbb{CP}}
\newcommand{\ddbar}{\partial\overline{\partial}}
\newtheorem{theorem}{Theorem}[section]
\newtheorem{lemma}{Lemma}[section]
\newtheorem{definition}{Definition}[section]
\newtheorem{remark}{Remark}[section]
\begin{document}
\author{Carlos Pérez-Garrandés}                  
\title{Ergodicity of laminations with singularities in Kähler surfaces}
\date{July 2012}
%\institute{C. Pérez-Garrandés \\
%Instituto de Matemáticas Interdisciplinar (IMI)\\
%Departamento de Geometría y Topología\\
%Facultad de Ciencias Matemáticas\\
%Universidad Complutense de Madrid\\
%              Plaza de las Ciencias 3\\
%              28040 Madrid\\
 %             Tel.: +34-913944410\\
  %            Fax: +34-913944564\\
   %           carperez@ucm.es\\
    %          }       
%\date{Received: date / Accepted: date}
\maketitle
\begin{abstract}
Let $\mathcal{F}$ be a holomorphic foliation on $\mathcal{M}$, a homogeneous compact Kähler surface, with only hyperbolic singularities. Let $\LL$ be a closed set saturated by leaves of the foliation, containing singularities and with every leaf dense on it. We say $\LL$ is a minimal lamination by Riemann surfaces with only hyperbolic singularities. If there are no positive closed currents directed by $\LL$, then there is a unique positive harmonic current directed by $\LL$ of mass one. This result was obtained previously for $\CP^2$ by Fornæss and Sibony, by developing intersection theory for currents. By applying the same theory we obtain the result for the rest of homogeneous compact Kähler surfaces.
%Consider $\LL$ a lamination by Riemann surfaces with only hyperbolic singularities embedded in a compact homogeneous Kähler surface. Suppose that every leaf of the lamination is dense on it and every regular local flow box can be written as a graph of a holomorphic function $f_t$ depending on a parameter $t$ with a constant $C>0$ holding that the maximum distance between two different plaques $\Gamma_t$ and $\Gamma_{t'}$ is $C$ times the distance between $t$ and $t'$. Assume also that $\LL$ has no directed positive closed current. Under these assumptions we prove that there is a unique directed harmonic current of mass one. This result was obtained previously for $\CP^2$ and we do the generalization for every compact homogeneous Kähler surfaces, which following a paper by Tits \cite{Ti}, are $\CP^2$, $\CP^1\times\CP^1$, products of $\CP^1$ times elliptic curves, and complex tori. We deal with each surface separately. 
%Although proving the non existence of directed closed current is not easy, in the last section we provide an example in $\CP^1\times\CP^1$  which could satisfy the assumption. 
%\keywords{Laminations \and Harmonic Currents \and Foliations \and Kähler surfaces}
%\subclass{Primary:37F75\and Secondary:32U40}
\end{abstract}
\begin{section}{Introduction}
The aim of this work is to obtain a generalization for every compact homogenous Kähler surface of the main result obtained by Fornæss and Sibony in \cite{FS3} via the theory they developed in \cite{FS1} (see also \cite{FS2}). These works are devoted to prove the uniqueness of the ergodicity for laminations in $\CP^2$ without directed closed currents.\par
\begin{definition}
We say $(X,\LL,E)$ is a transversally Lipschitz lamination by Riemann surfaces with singular set $E\subset X$, if $X$ is a compact topological space  such that  for every $p\not\in E$ we can find local charts $\phi_i:\Delta\times \mathcal{T}\rightarrow X$ where $\Delta$ is the unit disk and $\mathcal{T}$ is a metric space. These charts satisfy that the change of coordinates is $\phi_i^{-1}\circ\phi_j(z,t)=(f_{ij}(z,t),h_{ij}(t))$ with $h_{ij}$ Lipschitz, $f_{ij}$ holomorphic in the first variable and Lipschitz in the second one. These local charts are called flow boxes.
\end{definition}

The laminations we will deal with in this article will be embedded in complex surfaces $\mathcal{M}$. Then, if $ \phi:\Delta_{\delta,\delta'}^2\rightarrow U\subset\mathcal{M}$ is a local chart from a polydisk of radii $\delta$ and $\delta'$ to  $\mathcal{M}$ centered at $p$, the plaques $\Gamma_w$ of these flow boxes can be written as graphs $(z,f_w(z))$ with $z\in \Delta_\delta$, $w\in \mathcal{T}\subset \Delta_\delta'$ and $f_w$ being an holomorphic function verifying that $f_w(0)=w$.\par
%This kind of objects can be seen as a generalization of Riemann surfaces and they appear from certain dynamical systems. 
%There is a problem about whether there is or not any invariant closed minimal set for a foliation in $\CP^2$ not passing through a singularity. If there were, it would have a structure of Riemann surface lamination.\par
\begin{definition}
Let $(X,\LL,E)$ be a lamination by Riemann surfaces with singularities embedded on a compact complex surface $\mathcal{M}$, with $E$ discrete. We say that $p\in E$ is a hyperbolic singularity , if we can find $U\subset\mathcal{M}$ a neighbourhood of $p$ and some holomorphic coordinates $(z,w)$ centered on $p$ such that the leaves of $(X,\LL,E)$ are invariant varieties for the holomorphic 1-form $\omega=zdw-\lambda w dz$, with $\lambda\in\CC\setminus \RR$.
\end{definition}

%Finding examples of laminations with different ergodic behaviour or laminations embedded in manifolds, specially in $\CP^n$ is a very active research field, see Lozano-Rojo (2009) \cite{LR}; Fornæss, Sibony and Wold (2011) \cite{FSW}; and Deroin (2008) \cite{Der}. Instead of examples, we are interested in some necessary conditions that laminations $X$ embedded in $\mathcal{M}$ should satisfy. These are ergodic type conditions, and we will deduce them from the geometry of the ambient space. Specifically, we will use the group of automorphisms of $\mathcal{M}$. 

\begin{theorem}\label{teoremaco}
Let $\mathcal{M}$ be a homogeneous compact Kähler surface containing a minimal Lipschitz lamination $\LL$ by Riemann surfaces with hyperbolic singularities. If there are no closed currents directed by $\LL$, then there is a unique directed harmonic current of mass one.
\end{theorem}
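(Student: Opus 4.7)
The plan is to follow the Forn\ae{}ss--Sibony blueprint of \cite{FS3}, built on the intersection theory of $dd^c$-closed currents developed in \cite{FS1}, and to port each step from $\CP^2$ to a general homogeneous compact K\"ahler surface $\mathcal{M}$. Existence of at least one directed positive harmonic current of mass one follows from a standard leafwise averaging argument: since the singularities of $\LL$ are hyperbolic, the Poincar\'e metric on the leaves is well behaved near $E$, and averaging the area measure of leafwise discs against the heat semigroup produces a $dd^c$-closed positive current directed by $\LL$. The heart of the proof is therefore uniqueness.

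Let $T_1,T_2$ be two directed harmonic currents of mass one. The first step is to verify that the Forn\ae{}ss--Sibony intersection theory goes through in this setting. The Lipschitz condition on $\LL$ together with hyperbolicity of the singularities forces local plurisubharmonic potentials $u_i$ of the $T_i$ to have at most logarithmic growth near $E$, which is enough to define $T_1\wedge T_2$ as a positive Borel measure on $\mathcal{M}$. This is essentially a local check near $E$, identical to the one performed in \cite{FS1} for $\CP^2$, since away from $E$ the intersection is classical and the local model at a hyperbolic singularity does not depend on the ambient surface.

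The step that truly uses the homogeneity of $\mathcal{M}$ is the cohomological reduction. For $\CP^2$ one uses $\dim H^{1,1}=1$ to write $T_i=\omega+dd^cu_i$ with $\omega$ the Fubini--Study form. For the remaining homogeneous K\"ahler surfaces ($\CP^1\times\CP^1$, complex tori, and $\CP^1\times E$) one has $\dim H^{1,1}>1$, so this must be modified. I would use that $\mathcal{M}$ admits a transitive compact Lie group $K$ of holomorphic isometries: the space of $K$-invariant $(1,1)$-forms is a finite-dimensional, canonical set of smooth harmonic representatives for $H^{1,1}(\mathcal{M},\RR)$. Testing $T_1-T_2$ against this space of forms, combined with a transverse averaging of $\LL$, allows one to upgrade a nonzero cohomology class of the difference to a nonzero closed current directed by $\LL$, contradicting the standing hypothesis. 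Hence $T_1-T_2$ is Aeppli-exact and can be written as $T_1-T_2=\sbars$ for some $(0,1)$-current $S$.

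With this decomposition, Stokes' theorem together with $dd^cT_i=0$ yields $\int(T_1-T_2)\wedge T_i=0$, so $\int T_1\wedge T_1=\int T_1\wedge T_2=\int T_2\wedge T_2$. Combined with the Cauchy--Schwarz inequality for the positive pairing $(U,V)\mapsto\int U\wedge V$ on directed positive currents, this forces $T_1$ and $T_2$ to be proportional, and after mass normalization $T_1=T_2$. The main obstacle I anticipate lies in the cohomological reduction: the hypothesis excludes closed currents \emph{genuinely} directed by $\LL$, but the naive Hodge decomposition of a directed harmonic current produces a smooth closed form as its harmonic component, which is \emph{not} directed. Extracting a genuine directed closed current from a nonzero harmonic component of $T_1-T_2$ is the delicate point, and it is precisely here that homogeneity should enter, providing canonical $K$-invariant representatives whose pairing with $\LL$ via the group action yields directed closed objects.
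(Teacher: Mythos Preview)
Your route diverges from the paper's at exactly the point where the real work lies. The paper does not attempt any direct cohomological reduction of $T_1-T_2$. Instead it reduces uniqueness, via a theorem of Forn\ae ss--Sibony quoted in Section~2, to the single assertion that the \emph{geometric} self-intersection $T\wedge_g T$ vanishes for every directed positive harmonic current $T$. This geometric product is defined as a limit $\lim_{\epsilon\to 0}T\wedge\Phi_{\epsilon*}T$ for a one-parameter family of automorphisms $\Phi_\epsilon\to\mathrm{Id}$, and the bulk of the paper is devoted to constructing, for each of $\CP^1\times\CP^1$, $\CP^1\times\TT^1$ and $\TT^2$ (the remaining cases after Tits' classification), an explicit such family together with a finite cover by flow boxes of three types (singular, ``along'' the motion, ``transversal'' to it) with the property that the number of intersection points of a plaque $\Gamma_1$ with a perturbed plaque $\Gamma_2^\epsilon$ is bounded by a fixed $N$ independent of $\epsilon$. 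From that bound and the absence of atoms for the transverse measure, $T\wedge_g T=0$ follows. Homogeneity enters only to supply enough automorphisms for the perturbation.

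Your sketch has two genuine gaps. The first you already flag: extracting a closed \emph{directed} current from a nonzero cohomology class of $T_1-T_2$ is not achieved by testing against $K$-invariant forms, and no ``transverse averaging of $\LL$'' is available here---the lamination is only Lipschitz with singularities, and there is no invariant transverse measure to average with (indeed, such a measure would itself produce a closed directed current). The second gap is more serious. Your final step claims that $\int T_1\wedge T_1=\int T_1\wedge T_2=\int T_2\wedge T_2$ plus ``Cauchy--Schwarz for the positive pairing $(U,V)\mapsto\int U\wedge V$'' yields proportionality. But this pairing is \emph{not} positive definite on differences of positive currents; on closed $(1,1)$-classes it is the intersection form, of signature $(1,h^{1,1}-1)$, so $\int(T_1-T_2)\wedge(T_1-T_2)=0$ by itself says nothing. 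What makes the Forn\ae ss--Sibony argument work is precisely the geometric product you are bypassing: $T_1\wedge_g T_2$ is a genuine positive measure (a weighted count of intersection points of plaques), so once $T_i\wedge_g T_i=0$ one obtains $(T_1-T_2)\wedge_g(T_1-T_2)=-2\,T_1\wedge_g T_2\le 0$, while the identification with an energy term gives $\ge 0$; the resulting $T_1\wedge_g T_2=0$ then forces $T_1=T_2$ through the laminar structure. None of this positivity is available in a purely cohomological scheme, and the hard step---establishing $T\wedge_g T=0$ by controlling plaque intersections under explicit automorphisms of each surface---is exactly what the paper carries out.
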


This theorem was proven by Fornæss and Sibony in \cite{FS3} for $\CP^2$. following Tits  \cite{Ti} there are only three other cases of homogeneous compact Kähler surfaces: $\CP^1\times\CP^1, \CP^1\times\TT^1, \TT^2$. The theory developed in \cite{FS1} works for every compact homogenous Kähler manifold, and according to that paper we just need to prove that the geometric selfintersection of a harmonic current always vanishes. \par

The reasoning in order to prove the theorem will be similar in the three cases. The proof will be made explicitly for $\CP^1\times\CP^1$. 
\end{section}
\begin{section}{Directed harmonic currents in laminations}
Let $(\mathcal{M},\omega)$ be a compact homogeneous Kähler surface. The space of $(1,1)$-forms can be endowed with the supremum norm, in this way the space of $(1,1)$-forms gets a structure of Banach space. A $(1,1)$-current $T$ of order 0 is a $\CC$ linear functional $T$ on this space. We will deal with positive harmonic currents directed by a lamination. It means that $T(\phi)\geq 0$ if $\phi$ is a positive $(1,1)$-form, $T_{|U}(\gamma)=0$ where $\gamma$ is a $(1,0)$-form in $U$ holding that the plaques of the lamination in $U$ are integral varieties of $\gamma$ and $T(\ddbar u)=0$ for every real function $u$. More information about currents can be found in Demailly's book \cite{De}.\par
This kind of currents can be decomposed in regular flow boxes as
$$T=\int_{t\in \mathcal{T}} h_t[\Gamma_t]d\mu(t).$$
where $h_t$ is a harmonic function along each plaque depending on the point in the transversal $t\in\mathcal{T}$, $[\Gamma_t]$ is the integration current over the plaque $\Gamma_t$ and $\mu$ is a local transversal measure.\par
\begin{remark}[\cite{FS1}] If a positive harmonic current on a laminated compact set $X$ gives mass to a leaf, then this leaf is a compact Riemann surface. However, we will assume the non existence of closed leaves, then $\mu$ cannot have mass on points.
\end{remark}
We need to recall another important concept from \cite{FS1}. The geometric selfintersection of this kind of currents in homogeneous manifolds is a measure defined in a flow box as follows. Let $u$ be a test function with support in the flow box, then we define the geometric selfintersection as
$$T\wedge_g T(u)=\lim_{\epsilon\rightarrow 0}T\wedge_g T^\epsilon(u)= \int{\sum_{p\in J_{\alpha,\beta}^\epsilon} u(p)h_\alpha(p)h_\beta^\epsilon(p)d\mu(\alpha)d\mu(\beta)},$$
where we have chosen a family of automorphisms $\Phi_\epsilon\rightarrow Id$, such that $T^\epsilon:=\Phi_{\epsilon *}(T)$ and $\Gamma_\beta^\epsilon:=\Phi_\epsilon(\Gamma_\beta)$. The set $J^\epsilon_{\alpha,\beta}$ is the set of the intersection points between $\Gamma_\alpha$ and $\Gamma_\beta^\epsilon$.\par
%With the work carried out in that article, we can deduce in a intrinsic way the following theorem
\begin{theorem}[Fornæss, Sibony \cite{FS3}]
Let $\LL$ be a minimal Lipschitz lamination with only hyperbolic singularities in a compact homogeneous Kähler manifold $(\mathcal{M},\omega)$, without directed closed currents. If $T\wedge_g T=0$ for every directed positive harmonic current $T$ there is only one of them with mass one.
\end{theorem}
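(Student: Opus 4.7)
The plan is to combine the convexity of the space of mass-one directed positive harmonic currents with the polarization identity for the geometric wedge, and then to promote the resulting local vanishing to a global uniqueness statement via the intersection theory of \cite{FS1}.

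First I would set $\mathcal{H}=\{T : T \text{ directed positive harmonic of mass } 1\}$ and observe that $\mathcal{H}$ is convex. Assuming for contradiction that $T_1,T_2\in\mathcal{H}$ are distinct, the current $S:=\tfrac12(T_1+T_2)$ again lies in $\mathcal{H}$, so by hypothesis $S\wedge_g S=0$. Expanding by symmetric bilinearity of $\wedge_g$ and using the hypothesis on $T_1$ and $T_2$ as well,
\[
0 \;=\; 4\,S\wedge_g S \;=\; T_1\wedge_g T_1 + 2\,T_1\wedge_g T_2 + T_2\wedge_g T_2 \;=\; 2\,T_1\wedge_g T_2.
\]
Since the measure $T_1\wedge_g T_2$ is a limit of sums of positive products $h_\alpha(p)h_\beta^\epsilon(p)$, it is non-negative, hence identically zero.

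Second, I would unpack $T_1\wedge_g T_2=0$ in a flow box. Writing $T_i=\int h_t^{(i)}[\Gamma_t]\,d\mu_i(t)$, the densities $h_t^{(i)}$ are strictly positive harmonic functions on their plaques (they cannot vanish on open subsets without vanishing identically on a leaf, which by minimality would force $h_t^{(i)}\equiv 0$ on a set of positive $\mu_i$-measure, contradicting $\|T_i\|=1$). The vanishing
\[
\lim_{\epsilon\to 0}\int\sum_{p\in J_{\alpha,\beta}^\epsilon} u(p)\,h_\alpha^{(1)}(p)\,h_\beta^{(2),\epsilon}(p)\,d\mu_1(\alpha)\,d\mu_2(\beta) \;=\; 0
\]
should then translate, via the hyperbolic singularity model and the Lipschitz holonomy, into a strong compatibility between the transversal measures $\mu_1$ and $\mu_2$.

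Third, I would promote this local information to a global one using the homogeneous K\"ahler structure of $\mathcal{M}$: averaging the family $\Phi_\epsilon\to\mathrm{Id}$ over the transitive automorphism group defines a cohomological self-intersection pairing on directed harmonic currents which, in the spirit of \cite{FS1}, decomposes as $\|T\wedge_g T'\|$ plus a non-negative diffuse part. Applied to the signed directed $\ddbar$-closed current $R:=T_1-T_2$ of zero mass, the polarization together with the vanishing of all geometric cross-terms controls $\langle R,R\rangle$, and the hypothesis of no directed closed currents rules out $R$ being a non-zero $d$-closed directed current. Combined with Hodge-index type positivity on homogeneous K\"ahler surfaces, this should force $R=0$, i.e.\ $T_1=T_2$.

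The main obstacle will be the third step. The polarization argument kills only the geometric component of the self-intersection, so the no-closed-currents assumption must be used in an essential way to eliminate the diffuse component and rule out any residual signed directed harmonic current of trivial cohomology. This is precisely the technical heart taken from \cite{FS1}, and the point at which the homogeneous structure of $\mathcal{M}$ beyond $\CP^2$ intervenes; it explains why the subsequent sections of the paper are devoted to verifying the hypothesis $T\wedge_g T=0$ rather than to redoing the uniqueness step itself.
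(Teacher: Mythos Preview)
The paper does not contain its own proof of this theorem. It is stated with attribution to Forn\ae ss and Sibony and the citation \cite{FS3}, and is used as a black box: the paper's contribution is precisely to verify the hypothesis $T\wedge_g T=0$ on the remaining homogeneous K\"ahler surfaces, not to re-derive the uniqueness conclusion from that hypothesis. Hence there is no proof here against which your proposal can be compared; you yourself note this in your final paragraph.

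As for the sketch on its own terms: your first step (polarization to get $T_1\wedge_g T_2=0$) is the standard opening, and step two is plausible. But step three is where all the content of \cite{FS1,FS3} lies, and what you have written there is a placeholder rather than an argument. Phrases like ``Hodge-index type positivity'' and ``the no-closed-currents assumption must be used in an essential way'' identify the right ingredients without saying how they combine; in particular, the passage from vanishing of the geometric intersection to equality of the transverse measures and densities requires the detailed energy and potential-theoretic estimates developed in \cite{FS1}. So your proposal correctly locates the difficulty but does not resolve it, which is consistent with the fact that the present paper does not attempt to resolve it either.
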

The existence of such currents was proven for laminations by Riemann surfaces with finite number of singularities by Berndtsson and Sibony in \cite{BS}.\par
Proving that the geometric selfintersection is equal to $0$ in a regular flow box is done by finding a bound $N$ on the number of intersection points of a plaque of the lamination and another plaque moved by an element of this family of automorphisms close enough to the identity. Indeed, if $u$ is a continuous function which is $0$ outside a flow box, since $u$ and $h_\alpha$ are bounded, then
$$|T\wedge_g T^\epsilon(u)|\leq \int_{d(\Gamma_\alpha,\Gamma_\beta)<C\epsilon}{N|\sup (h_\alpha)|^2|\sup u|d\mu(\alpha)d\mu(\beta)}\rightarrow 0$$
where $C>0$ is a constant. This limit holds because $\mu$ has no mass on single points.\par
Under some assumptions, that we recall at the beginning of section 3, in \cite{FS3} it is also proven that the geometric self-intersection is $0$ in an open linearizable neighbourhood of a hyperbolic singularity.\par
To sum up, we need to find a continuous family of automorphisms of each surface $\Phi_\epsilon$ with $\Phi_0= Id$ satisfying the conditions of \cite{FS3} around the singularities and a big enough $N\in\NN$ to ensure that $\Gamma_1$ and $\Gamma_2^\epsilon$ intersect each other at most at $N$ points when $\epsilon$ is small enough. $\Gamma_1$ and $\Gamma_2$ denote plaques of the same flow box, and  $\Gamma_2^\epsilon= \Phi_\epsilon(\Gamma_2)$. Hence, in order to prove Theorem \ref{teoremaco}, we just need to prove, for each compact complex Kähler surface $\mathcal{M}$ a theorem like the one below which is the key result in this article.
\begin{theorem} \label{teoprinc}
Let $\LL$ be a minimal transversally Lipschitz lamination with only hyperbolic singularities in $\mathcal{M}$ and without directed closed currents. There are a covering of the lamination outside the singular neighbourhoods, a $N_\LL\in \NN$ and a $\epsilon_\LL>0$ such that there are at most $N$ intersections between $\Gamma_1$ and $\Gamma_2^\epsilon$, where  $\Gamma_1$ and $\Gamma_2$ are plaques in the same flow box, for every $\epsilon$ with $|\epsilon|<\epsilon_\LL$.
\end{theorem}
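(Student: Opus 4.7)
The plan is to exploit the product structure of $\CP^1\times\CP^1$ and reduce the theorem to a uniform zero-count estimate for a holomorphic family, which I then close by a normal-families argument.

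First, I cover the non-singular part of $\LL$ by finitely many flow boxes $U_k$ sitting inside affine bidisks of $\CP^1\times\CP^1$; compactness of $\LL$ minus the chosen neighbourhoods of the singularities makes this cover finite. After slight shrinking, each flow box is of the form $\Delta_{\delta'}\times\mathcal{T}_k$ and its plaques are graphs $\Gamma_t=\{(z,f_{k,t}(z)):z\in\Delta_{\delta'}\}$ of a uniformly bounded family of holomorphic functions, with $t\mapsto f_{k,t}$ uniformly Lipschitz. The intersections I ultimately care about sit over an inner disk $\Delta_\delta$, $\delta<\delta'$.

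Second, I take the family of automorphisms $\Phi_\epsilon(z,w)=(z+\epsilon_1,w+\epsilon_2)$ written in the affine coordinates of each bidisk, extended globally by the action of $\mathrm{Aut}(\CP^1)^2$. For two plaques $\Gamma_i=\{(z,f_{t_i}(z))\}$ in the same flow box, the points of $\Gamma_1\cap\Phi_\epsilon(\Gamma_2)$ are in bijection with the zeros on $\Delta_\delta$ of the holomorphic function
\[
F_{t_1,t_2,\epsilon}(z):=f_{t_1}(z+\epsilon_1)-f_{t_2}(z)-\epsilon_2,
\]
provided $\Gamma_1\ne\Phi_\epsilon(\Gamma_2)$. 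The excluded coincidence case cannot happen for generic small $\epsilon$: saturating a coinciding pair of plaques along $\LL$ would produce a positive closed current directed by $\LL$, contradicting the standing hypothesis.

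Third, I argue by contradiction for the uniform bound on the zero count. If it fails in some flow box, there is a sequence $(t_{1,n},t_{2,n},\epsilon_n)$ with $\epsilon_n\to 0$ for which $F_n:=F_{t_{1,n},t_{2,n},\epsilon_n}$ has zero counts on $\Delta_\delta$ going to infinity. Normalising $G_n:=F_n/\sup_{\Delta_{\delta'}}|F_n|$ gives a family uniformly bounded by $1$ on $\Delta_{\delta'}$; by Montel's theorem a subsequence converges uniformly on compact subsets of $\Delta_{\delta'}$ to a holomorphic $G_\infty$. Provided $G_\infty\not\equiv 0$, Hurwitz's theorem bounds the number of zeros of $G_n$ on $\overline{\Delta_\delta}$ by the (finite) number of zeros of $G_\infty$ on a slightly larger closed disk, contradicting the divergence of the counts. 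Taking maxima over the finite cover and a small enough common $\epsilon_\LL$ yields the required constants.

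The main obstacle is ruling out the degenerate case $G_\infty\equiv 0$, which a priori could occur if the suprema of $|F_n|$ concentrated near $\partial\Delta_{\delta'}$. Here the hypothesis on $\LL$ re-enters: if $G_n\to 0$ uniformly on compacts of $\Delta_{\delta'}$, the plaque $\Gamma_{t_{1,\infty}}$ and the translate $\Phi_{\epsilon_\infty}(\Gamma_{t_{2,\infty}})$ coincide to all orders on the inner disk, and propagating this equality through the minimal lamination by holonomy and integrating against a transverse measure produces a non-trivial tangent positive current that is both directed by $\LL$ and closed, contradicting the assumption. This forces $G_\infty\not\equiv 0$ and completes the argument.
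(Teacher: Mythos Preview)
Your normal-families reduction is appealing, but the decisive step---excluding $G_\infty\equiv 0$---does not go through, and this is exactly where the substance of the theorem lies. From $G_n\to 0$ on compacta you can only conclude that the unnormalised limit $F_\infty$ vanishes identically, i.e.\ that $t_{1,\infty}=t_{2,\infty}$ and $\epsilon_\infty=0$. That is the trivial coincidence of a plaque with itself under the identity map; no closed directed current can be extracted from it, and your sentence about ``propagating this equality through the minimal lamination by holonomy'' has no content here because there is nothing nontrivial to propagate. The same objection applies to your earlier exclusion of the coincidence case: a single equality $\Gamma_1=\Phi_\epsilon(\Gamma_2)$ between two plaques does not saturate to $\Phi_\epsilon$-invariance of $\LL$, and no closed current is produced that way. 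In short, your compactness argument breaks precisely in the regime $t_{1,n},t_{2,n}\to t^*$, $\epsilon_n\to 0$, which is the entire difficulty; the Lipschitz (not smooth) transverse dependence makes it impossible to simply Taylor-expand $F_n$ in the parameters and read off a nonzero leading term.

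The paper proceeds by a genuinely different route. It fixes a one-parameter family $\Phi_\epsilon$ along a carefully chosen direction $(v_1,v_2)$ (subject to the constraints of \cite{FS3} near each singularity and tangent to $\LL$ at some regular point), and then builds a finite cover of $\LL$ outside the singular charts by flow boxes of two geometric kinds. In the \emph{tangential} boxes every plaque $\Gamma$ meets its own translate $\Gamma^\epsilon$ for all small $\epsilon$ (this is established via Hurwitz near a point where the motion is tangent to the plaque); in the \emph{transversal} boxes, closeness of $\Gamma_1$ to $\Gamma_2^\epsilon$ forces a definite lower bound $d_{\min}(\Gamma_1,\Gamma_2)\ge c_4|\epsilon|$. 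The quantitative input is Lemma~4.2 of \cite{FS1}: $N$ intersections of $\Gamma_1$ with $\Gamma_2^\epsilon$ imply $d_{\max}(\Gamma_1,\Gamma_2^\epsilon)\le Ac^N|\epsilon|$ with $c<1$. Following the leaf along a chain of at most $M$ flow-box changes (a uniform $M$ exists by compactness and minimality) from a transversal box to a tangential one, the exponential decay $c^{N/2^M}$ eventually forces $d_{\min}(\Gamma',(\Gamma')^\epsilon)>0$ in the tangential box reached---contradicting its defining property $\Gamma'\cap(\Gamma')^\epsilon\neq\emptyset$. This back-and-forth between the two box types, combined with the exponential estimate from \cite{FS1}, is what substitutes for the non-degeneracy you were unable to establish.
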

The proof relies in finding a covering by flow boxes of three different types according to the behaviour of the lamination and to the family of automorphisms inside them. Firstly, open linearizable neighbourhoods around the singularities where results of \cite{FS3} can be applied. %We need to check that the family of automorphisms holds the conditions under which their work is made.
Secondly, some flow boxes having intersection points between a plaque and itself moved by an element of the family of automorphims close enough to the identity. And finally, flow boxes where, if $\Gamma_1$ and $\Gamma_2^\epsilon$ are very close, then $\Gamma_1$ and $\Gamma_2$ are far away. The final step is to get a contradiction when there are too many intersection points of a original plaque and a perturbed one. It is done by following a leaf up to one of the plaques of the second type and showing that there is a plaque which does not intersect itself moved by the automorphism.\par
Since different surfaces have different group of automorphisms we cannot consider the same family for these three cases, but we will search for automorphisms with similar local behaviour. Main difference with the case of $\CP^2$ is that the automorphisms taken in that case by Fornæss and Sibony have a line of fixed points. In our cases there is at most one fixed point.\par
In order to find these coverings, Hurwitz's Theorem will play a special role. Since we can see plaques locally as graphs, and these graphs vary continuously in the transversal, we can apply Hurwitz's Theorem. It is useful to prove that, when we move a plaque by a tangential motion, the moved plaque and the original one intersect each other.\par
We will also need the following remark.
\begin{remark}\label{remark1}Writing the plaques of the flow boxes as graphs, Lipschitzness implies that for $t, t'\in \mathcal T\subset \Delta_{\delta'}$, there is a constant $C>1$ depending on the flow box, such that 
$$\frac{d(t,t')}{C}\leq d(f_t(z),f_{t'}(z))\leq Cd(t,t')$$
for every $z\in \Delta_\delta$. In the expression above $d$ is the distance in the transversal. We will denote $d(f_t(z),f_{t'}(z))$ by $d_z(\Gamma_t,\Gamma_{t'})$.
\end{remark}
This remark will be necessary to travel from flow box to flow box and we will use it sistematically in our arguments. \par
\end{section}
\begin{section}{Case of $\CP^1\times\CP^1$}
We consider $\CP^1\times\CP^1$ with the Fubini-Study metric in each factor. Since it is a product space then $T(\CP^1\times\CP^1)=T\CP^1\times T\CP^1$. Hence, we have a notion of verticality and horizontality in the tangent bundle defined in the natural way.\par
Assume that the lines $[1:0]\times\CP^1$ and $\CP^1\times[1:0]$ do not contain any singularity, $p=([1:0],[1:0])\in\LL$ and $T_p\LL$ is not vertical neither horizontal.\par
Therefore, we have four different charts covering $\CP^1\times\CP^1$, $\psi_i:\CC^2\rightarrow\CP^1\times\CP^1$ for $i=1,2,3,4$ defined as follows:
\begin{enumerate}[a)]
\item $\psi_1(z,w)=([z:1],[w:1]),$
\item $\psi_2(z,w)=([1:z],[w:1]),$
\item $\psi_3(z,w)=([z:1],[1:w]),$
\item $\psi_4(z,w)=([1:z],[1:w]).$
\end{enumerate}
Clearly every singularity is contained in the image of $\psi_1$.\par
The family of automorphisms we are searching for is 
$$\Phi_\epsilon([z_1:z_2],[w_1:w_2])=([z_1+\epsilon v_1 z_2:z_2],[w_1+\epsilon v_2 w_2:w_2])$$ 
for a suitable vector $(v_1,v_2)$, but we have to choose it carefully according to the behaviour of the lamination in a neighbourhood of a singularity.\par
Let $s_1,s_2,\dots,s_n$ be the singularities. Since they are hyperbolic, there exist $A_{i_A}$ a linearizable neighbourhood around $\psi_1^{-1}(s_{i_A})$ and a change of coordinates $\phi_{i_A}:A_{i_A}\rightarrow \Delta_{\delta,\delta'}^2$ with $\phi_{i_A}(\psi^{-1}(s_{i_A}))=(0,0)$ such that in the new coordinates $(z',w')$ the leaves of the lamination are integral varieties of the $1$-form $w'dz'-\lambda_{i_A} z' dw'$ with this $\lambda_{i_A}$ veryfing that $\lambda_{i_A}\not\in\RR$. Hence the separatrices are $\{w'=0\}$ and $\{z'=0\}$. $\Phi_\epsilon$ would act as a translation by $(\epsilon v_1,\epsilon v_2)$ in $\psi_1^{-1}(A_{i_A})=\Delta_{\delta,\delta'}^2$, $\Phi_\epsilon(z,w)=(z+\epsilon v_1,w+\epsilon v_2)$.\par
Next we define $\Phi_\epsilon^{i_A}=\phi_{i_A}^{-1}\Phi_\epsilon\phi_{i_A}$, and $\Phi_\epsilon^{i_A}$ has to hold the conditions of \cite{FS3}: it could be written as $(\alpha(\epsilon),\beta(\epsilon))+(z',w')+\epsilon O(z',w')$ with $\alpha'(0),\beta'(0)\not= 0$ and $\frac{\beta'(0)}{\alpha'(0)}\not=\lambda_{i_A}$. Notice that $(\alpha'(0),\beta'(0))=D\phi_{{i_A}|\Phi_\epsilon(0,0)}^{-1}(v_1,v_2)=:(v_1^{i_A},v_2^{i_A})$. The third element of the sum appears if and only if $\phi$ is not linear. In fact, it is not linear because in that case the lamination would have a directed closed current, the integration current on the separatrix, which would be a projective line. These conditions must hold around every singularity. Therefore we have to choose a vector $(v_1,v_2)$ such that:
\begin{enumerate}[i)]
\item $v_1^{i_A}, v_2^{i_A}\not=0$ and $\frac{v_2^{i_A}}{v_1^{i_A}}\not=\lambda_{i_A}$,\label{cond1}
\item $(v_1,v_2)$ is unitary,\label{cond2}
\item $v_1,v_2\not=0$ and $(v_2,v_1)$ does not belong to $T_p\LL$,
\item $(v_1,v_2)$ is tangent for certain point $p'\in\CC^2\setminus(\bigcup A_{i_A})$\label{cond4}.
\end{enumerate}
So, we have fixed $(v_1,v_2)$ and we have the family of automorphisms $\Phi_\epsilon$. The next step is choosing a good covering of the lamination $\LL$ as follows:

\begin{enumerate}[(1)]
\item We already have linearizable neighbourhoods of the singularities where \cite{FS3} can be applied, we will denote them by $A_{i_A}$. We will call them singular neighbourhoods.
\item We need a neighbourhood $U_0$ of $p$, because it is a fixed point for every element of the family of automorphisms. We will find it by using $\psi_4$. 
\item Afterwards, we will cover $\CP^1\times[1:0]\setminus U_0$ via $\psi_3$ with two types of flow boxes, horizontal $W_{j_W}^a$ and and vertical $W_{i_W}^t$. The superindices come from ``along'' and ``transversal'', referring to the behaviour of the laminations with respect to the automorphisms.
\item Same for $[1:0]\times\CP^1\setminus U_0$ with $\psi_2$. We will obtain $V_{i_V}^t$ and $V_{j_V}^a$. 
\item And finally, by using $\psi_1$, we get flow boxes $B_{j_B}^a$ and $B_{i_B}^t$ covering the rest of the points of $\CP^1\times\CP^1$ depending on whether every plaque is transversal to the motion or not, respectively.
\end{enumerate}
\begin{lemma}
There is a flow box $U_0$ centered at $p=([1:0],[1:0])$ biholomorphic to $\Delta_\delta\times\mathcal{T}$  and an $\epsilon_0>0$ such that, if $\Gamma_w$ and $\Gamma_{w'}^\epsilon$ intersect each other in $N_0$ points, then the vertical distance in $|z|=\delta$ verifies
$$d_z(\Gamma_w,\Gamma_{w'})> c_0 |\epsilon|$$
with certain $c_0>0$ for every $\epsilon$ with $|\epsilon|<\epsilon_0$.
\end{lemma}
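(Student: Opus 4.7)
The plan is to work in the chart $\psi_4(z,w)=([1:z],[1:w])$, where $p=(0,0)$ and the automorphism reads
$$\Phi_\epsilon(z,w)=\left(\frac{z}{1+\epsilon v_1 z},\;\frac{w}{1+\epsilon v_2 w}\right).$$
This $\Phi_\epsilon$ fixes $p$ with identity linearization, so $\Phi_\epsilon-\mathrm{Id}=O(\epsilon\|(z,w)\|^2)$. I would choose $\delta$ small enough that $\Delta_\delta\times\Delta_\delta$ sits inside a flow box around $p$ in which plaques are graphs $w=f_t(z)$, and then write the image $\Gamma_{w'}^\epsilon$ again as a graph $w=g_{w'}^\epsilon(z)$. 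Taylor expansion in $\epsilon$ gives
$$g_{w'}^\epsilon(z)=f_{w'}(z)+\epsilon H_{w'}(z)+O(\epsilon^2),\qquad H_{w'}(z):=v_1 z^2 f_{w'}'(z)-v_2 f_{w'}(z)^2,$$
so the intersection points of $\Gamma_w$ with $\Gamma_{w'}^\epsilon$ are exactly the zeros of the holomorphic function $F(z):=f_w(z)-g_{w'}^\epsilon(z)=(f_w-f_{w'})(z)-\epsilon H_{w'}(z)+O(\epsilon^2)$.

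The main step is to establish the uniform lower bound
$$M:=\inf_{w'\in\overline{\mathcal T}}\,\max_{|z|\le\delta}|H_{w'}(z)|>0.$$
For $w'\neq 0$, $H_{w'}(0)=v_2(w')^2\ne 0$. For $w'=0$, one checks $H_0(0)=H_0'(0)=0$ but $H_0''(0)=2f_0'(0)(v_1-v_2f_0'(0))$ is nonzero: $f_0'(0)\ne 0$ because $T_p\LL$ is neither horizontal nor vertical, and $v_1\ne v_2f_0'(0)$ because condition (iii) says $(v_2,v_1)\notin T_p\LL$. By the Lipschitz dependence of $f_{w'}$ on $w'$ from Remark \ref{remark1} and compactness of $\overline{\mathcal T}$, these pointwise statements upgrade to the uniform bound $M>0$. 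I would then set $c_0:=M/3$ and pick $\epsilon_0$ so small that the $O(\epsilon^2)$ remainders are dominated by $M|\epsilon|/3$ on $\overline{\Delta_\delta}$.

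The rest is by contradiction. If no such $N_0$ worked, one could extract a sequence $(w_n,w_n',\epsilon_n)$ with $\epsilon_n\to 0$, with $\Gamma_{w_n}\cap\Gamma_{w_n'}^{\epsilon_n}$ having at least $n$ points inside $U_0$, and with $d_z(\Gamma_{w_n},\Gamma_{w_n'})\le c_0|\epsilon_n|$ on $|z|=\delta$. Remark \ref{remark1} then forces $|w_n-w_n'|\to 0$, so after extraction $w_n,w_n'\to w_\infty$; the maximum principle extends the bound $|f_{w_n}-f_{w_n'}|\le c_0|\epsilon_n|$ to all of $\Delta_\delta$. The rescaled functions $F_n/\epsilon_n$ are thus uniformly bounded and, along a further subsequence, converge locally uniformly to $F^*(z)=L(z)-H_{w_\infty}(z)$ with $|L|\le c_0$. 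Since $c_0<M\le\max|H_{w_\infty}|$, we have $F^*\not\equiv 0$. Picking finitely many points where $F^*$ is bounded away from zero and applying Jensen's formula centered at each yields a uniform bound on the number of zeros of $F_n$ in any compact subset of $\Delta_\delta$, contradicting the assumption that this number is at least $n$.

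The main obstacle is the uniform non-vanishing of $H_{w'}$, which is precisely where condition (iii) enters: the ODE $H_0\equiv 0$ integrates to $f_0(z)=v_1 z/(v_2-Cv_1 z)$, forcing $f_0'(0)=v_1/v_2$, exactly what (iii) forbids. The remaining steps (normal families, Hurwitz/Jensen, diagonal contradiction) are standard; the only mild technicality is handling zeros in the narrow annulus near $|z|=\delta$, which I would circumvent by choosing $U_0$ as a flow box of radius strictly smaller than the radius on which the plaques are defined.
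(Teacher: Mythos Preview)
Your argument is essentially correct and reaches the same conclusion, but by a genuinely different route than the paper. The paper proceeds \emph{directly}: in the chart $\psi_4$ it carries out an explicit computation of $d_z(\Gamma_0,\Gamma_0^\epsilon)$ for the central plaque, factoring $f_0(z)=z\,g_0(z)$ and estimating the resulting expression term by term to obtain $\min_{|z|=\delta}d_z(\Gamma_0,\Gamma_0^\epsilon)\ge\tfrac{1}{4}\delta^2 m m_0|v_2|\,|\epsilon|$; this is then extended to every plaque $\Gamma_{w'}$ by shrinking the transversal. The passage from many intersections to the final inequality is handled by the triangle inequality together with the ready-made Lemma~4.2 of \cite{FS1}, which gives $\max d_z(\Gamma_w,\Gamma_{w'}^\epsilon)\le c^{N_0}K|\epsilon|$; choosing $N_0$ large makes this smaller than the explicit lower bound just obtained.

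Your approach replaces both steps. Instead of the explicit algebra, you identify the first variation $H_{w'}(z)=v_1z^2f_{w'}'(z)-v_2f_{w'}(z)^2$ and check its non-triviality via $H_0''(0)=2f_0'(0)(v_1-v_2f_0'(0))\ne 0$, which is a cleaner way to see exactly where condition~(iii) enters (the paper encodes the same condition through the hypothesis $|g_w(z)-v_1/v_2|>m_0$). Instead of quoting Lemma~4.2 of \cite{FS1}, you run a contradiction with normal families and Hurwitz/Jensen, in effect reproving the needed piece of that lemma from scratch. The trade-off: the paper's argument yields effective constants and an explicit $N_0$, and isolates the ``many intersections $\Rightarrow$ small distance'' step as a citable black box used again later in the paper; your argument is more conceptual but non-effective. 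Two small points to tighten: the negation of the conclusion only gives the bound $d_z\le c_0|\epsilon|$ at \emph{one} boundary point, so the extension to all of $\Delta_\delta$ should go through Remark~\ref{remark1} (Lipschitz comparability) rather than the maximum principle, at the cost of a harmless constant in the choice of $c_0$; and you should note that after extraction one may also assume $\epsilon_n\to\epsilon_*$, handling the case $\epsilon_*\ne 0$ by the same Hurwitz argument with limit $f_{w_\infty}-g_{w_\infty'}^{\epsilon_*}$.
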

\begin{proof}
We will use $\psi_4$. Consider a horizontal flow box $U_0'=\Delta_\delta\times \mathcal{T}$ centered at $p$, $\Delta_\delta$ is a disk centered at $0$, and $\mathcal{T}$ is a topological space containing $0$. The points in the flow box can be written as $(z,w+f_w(z))$, where $f_w$  are holomorphic functions satisfying $f_w(0)=0$ for every $w\in\mathcal{T}$.\par
Since $f_0'(0)\not=0$ and $(v_2,v_1)$ is not a scalar multiple of $(1,f_0'(0))$, we can choose $U_0$ verifying that $m<|f_w'(z)|<M$, $|f_w'(z)-\frac{v_1}{v_2}|>m_0>0$ for every $(z,w)\in\Delta_\delta\times\mathcal{T}$ and as $f_w(z)=g_w(z)z$ for a certain holomorphic function $g_w$ varying continuously with $w$, we can also ask for $m<|g_w(z)|<M$ and $|g_w(z)-\frac{v_1}{v_2}|>m_0>0$ for every $(z,w)\in \Delta_\delta\times\mathcal{T}$.\par
Now, we want to find $\delta_0$ small enough to get that if $\Gamma_w$ and $\Gamma_{w'}^\epsilon$ intersect each other in $N_0$ points, then the vertical distance in $z$ satisfies 
$$d_z(\Gamma_w,\Gamma_{w'})>d_z(\Gamma_{w'},\Gamma_{w'}^\epsilon)-d_z(\Gamma_w,\Gamma_{w'}^\epsilon)> c_0 |\epsilon|$$
with certain $c>0$ for every $z$ with $|z|=\delta_0$.
The idea is to find a lower bound for $d_z$. Since $\LL$ is a Lipschitz lamination, we can find the bound for $\Gamma_0$ and shrink later the transversal to ensure that every plaque holds the inequality.\par
In the domain of $\psi_4$, 
$$\Phi_\epsilon(z,w)=\left(\frac{z}{1+\epsilon v_1 z},\frac{w}{1+\epsilon v_2 w}\right),$$
then
$$\Gamma_0^\epsilon=\left\{\left(\frac{z}{1+\epsilon v_1 z},\frac{f_0(z)}{1+\epsilon v_2 f_0(z)}\right),z\in \Delta_\delta\right\}.$$
Hence, if we fix $z\in \Delta_\delta$ such that $z'=\frac{z}{1+\epsilon v_1 z}\in \Delta_\delta$, then $z=\frac{z'}{1-\epsilon v_1 z'}$. So, the transversal distance in a point $z$ is 
$$d_z(\Gamma_0,\Gamma_0^\epsilon)=\left|f_0(z)-\frac{f_0(\frac{z}{1-\epsilon v_1 z})}{1+\epsilon v_2 f_0(\frac{z}{1-\epsilon v_1 z})}\right|.$$
We can write it as follows
\begin{align*}
d_z(\Gamma_0,\Gamma_0^\epsilon)=&\left|zg_0(z)-\frac{(\frac{z}{1-\epsilon v_1 z})g_0(\frac{z}{1-\epsilon v_1z})}{1+ \frac{z\epsilon v_2}{1-\epsilon v_1z}g_0(\frac{z}{1-\epsilon v_1 z})}\right|\\
=&\left|zg_0(z)-\frac{zg_0(\frac{z}{1-\epsilon v_1 z})}{1+ z\epsilon \left(-v_1+v_2g_0(\frac{z}{1-\epsilon v_1 z})\right)}\right|\\
=&\left|\frac{z\left[g_0(z)-\epsilon z g_0(z)\left(v_2g_0\left(\frac{z}{1-\epsilon v_1 z}\right)-v_1\right)-g_0\left(\frac{z}{1-\epsilon v_1 z}\right)\right]}{1+\epsilon z\left(-v_1+v_2g_0\left(\frac{z}{1-\epsilon v_1 z}\right)\right)}\right|\\
\geq &\left| \frac{z}{1+z\epsilon\left(-v_1+v_2 g_0\left(\frac{z}{1-\epsilon v_1 z}\right)\right)}\right|\left( F-G\right) ,
\end{align*}
where
\begin{align*}
F:=&\left|\epsilon z g_0(z)\left(v_2g_0\left(\frac{z}{1-\epsilon v_1 z}\right)-v_1\right)\right|,\\
G:=&\left| g_0(z)-g\left(\frac{z}{1-\epsilon v_1 z}\right)\right|.
\end {align*}
We are searching for a lower bound of this last expression. $F$ is obviously greater than $|\epsilon||z| m m_0 |v_2|$ so we have to find an upper bound for $G$. We observe that $\frac{z}{1-\epsilon v_1 z}=z+\frac{\epsilon v_1 z^2}{1-\epsilon v_1 z}$, and considering Taylor expansion of $g_0$ in $0$, we obtain that
\begin{align*}
\left| g_0(z)-g\left(\frac{z}{1-\epsilon v_1 z}\right)\right|&=\left|\sum_{n=p}^\infty a_n z^n-\sum_{n=p}^\infty a_n \left(z+\frac{\epsilon v_1 z^2}{1-\epsilon v_1 z}\right)^n\right|\\&=|\epsilon v_1z^{p+1}h_\epsilon(z)|,
\end{align*}
with $|h_\epsilon(z)|$ bounded by $M_0$ for every $z$ and every $\epsilon$ small enough.\par
Thus, by replacing these bounds in the previous expression,
\begin{align*}
d_z(\Gamma_0,\Gamma_0^\epsilon)&\geq\frac{|z|}{1+z\epsilon\left(-v_1+v_2 g_0\left(\frac{z}{1-\epsilon v_1 z}\right)\right)}\left[|\epsilon||z|m|v_2|m_0-|\epsilon v_1 z^{p+1} h(z)|\right]\\
&\geq \frac{|\epsilon z^2|}{1+z\epsilon\left(-v_1+v_2 g_0\left(\frac{z}{1-\epsilon v_1 z}\right)\right)}(mm_0|v_2|-v_1|z|^pM_0).
\end{align*}
Now, we choose $\epsilon_0$ such that if $|\epsilon|<\epsilon_0$ then 
$$\frac{1}{1+z\epsilon\left(-v_1+v_2 g_0\left(\frac{z}{1-\epsilon v_1 z}\right)\right)}>\frac{1}{2},$$
for every $z\in\Delta_\delta$, and if we set $\delta$ to satisfy that $mm_0|v_0|>2|v_1|\delta^p M_0$, then
$$\min_{|z|=\delta}d_z(\Gamma_0,\Gamma_0^\epsilon)>\frac{\delta^2|\epsilon|mM_0|v_2|}{4}.$$
Therefore $\min_{|z|=\delta}d_z(\Gamma_w,\Gamma_{w'})\geq \min_{|z|=\delta}d_z(\Gamma_{w'},\Gamma_{w'}^\epsilon)-\max_{|z|=\delta}d_z(\Gamma_w,\Gamma_{w'}^\epsilon)$
then, by applying Lemma 4.2 of \cite{FS1} $$\min_{|z|=\delta}d_z(\Gamma_w,\Gamma_{w'})\geq\frac{\delta^2|\epsilon|mM_0|v_2|}{4}-c^N_0K|\epsilon|.$$ Hence if $N_0$ is big enough,  $$\min_{|z|=\delta}d_z(\Gamma_w,\Gamma_{w'})\geq\frac{\delta^2|\epsilon|mM_0|v_2|}{8}>0.$$
So the number $c_0$ we were searching for is $$c_0=\frac{\delta^2mM_0|v_2|}{8}.$$
\end{proof}

\begin{lemma}\label{lema1}
There is a covering of $\CP^1\times[1:0]\setminus U_0$ by flow boxes of two different types, $W_{j_W}^a$ and $W_{i_W}^t$ and an $\epsilon_1>0$,  %$0<d_1<|v_1|$ 
verifying that for every $\epsilon$ such that $|\epsilon|<\epsilon_1$,  
\begin{itemize}
\item if $\Gamma_w$ is a plaque in $W_{j_W}^a$ then $\Gamma_w^\epsilon\cap \Gamma_w \neq \emptyset$; 
\item if $\Gamma_z$ and $\Gamma_{z'}$ are plaques in $W_{i_W}^t$ satisfying that $\max d_w(\Gamma_z,\Gamma_{z'}^\epsilon)<\frac{|v_1||\epsilon|}{2}$ then $\min d_w(\Gamma_z,\Gamma_{z'})>\frac{|v_1||\epsilon|}{2}$.
\end{itemize}
\end{lemma}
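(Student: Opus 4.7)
I work in the chart $\psi_3$, in which $\CP^1\times[1:0]=\{w=0\}$ and
\[
\Phi_\epsilon(z,w)=\Bigl(z+\epsilon v_1,\ \frac{w}{1+\epsilon v_2 w}\Bigr),
\]
so that the infinitesimal motion along $\{w=0\}$ is the horizontal vector $(v_1,0)$. For each $q=(z_0,0)\in \CP^1\times[1:0]\setminus U_0$, I write the plaque of $\LL$ through $q$ as a graph $w=f_q(z)$ with $f_q(z_0)=0$. I call $q$ \emph{transversal} when $f_q'(z_0)\ne 0$ and \emph{tangential} when $f_q'(z_0)=0$: around the former I place vertical flow boxes $W^t$ in which every plaque is written as $z=g_t(w)$, and around the latter horizontal flow boxes $W^a$ in which every plaque is written as $w=f_t(z)$. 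Compactness of $\CP^1\times[1:0]\setminus U_0$ then reduces the cover to a finite one, and I take $\epsilon_1$ as the smallest of the finitely many local thresholds.

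For a transversal $q$, the image plaque in the vertical flow box is
\[
\tilde g_t^\epsilon(w)=g_t\!\Bigl(\frac{w}{1-\epsilon v_2 w}\Bigr)+\epsilon v_1=g_t(w)+\epsilon v_1+O(\epsilon w^2),
\]
so shrinking the $w$-radius makes $|\tilde g_t^\epsilon(w)-g_t(w)|$ arbitrarily close to $|v_1\epsilon|$, uniformly in $t$. Under the hypothesis $\max_w |g_z(w)-\tilde g_{z'}^\epsilon(w)|<|v_1||\epsilon|/2$, the reverse triangle inequality gives, at every $w$ in the box,
\[
|g_z(w)-g_{z'}(w)|\ge |\tilde g_{z'}^\epsilon(w)-g_{z'}(w)|-|g_z(w)-\tilde g_{z'}^\epsilon(w)|>|v_1||\epsilon|/2,
\]
once the box is chosen small enough that the $O(w^2)$ correction is dominated by the available slack. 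This yields the $W^t$ property.

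For a tangential $q$, I introduce on each plaque $w=f_t(z)$ of the horizontal box the holomorphic function $G_t(z):=v_2 f_t(z)^2+v_1 f_t'(z)$, whose zeros detect the points where the plaque tangent $(1,f_t'(z))$ is parallel to the motion direction $(v_1,-v_2 f_t(z)^2)$. Since $f_q(z_0)=f_q'(z_0)=0$, $G_q(z_0)=0$; moreover $G_q\not\equiv 0$, because otherwise $f_q$ would solve the Bernoulli ODE $v_1 f'+v_2 f^2=0$ with vanishing initial datum, forcing $f_q\equiv 0$ and hence identifying the plaque with a piece of the rational curve $\CP^1\times[1:0]$. That curve would then be a closed leaf, giving a positive closed directed current on $\LL$ and contradicting the hypothesis. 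Hurwitz's theorem applied to the continuous family $\{G_t\}$ therefore provides, for every $t$ close to $q$, an isolated zero $z_t$ of $G_t$ in the box. The condition for $(z,f_t(z))\in \Gamma_t\cap\Gamma_t^\epsilon$ reads
\[
H_\epsilon^t(z):=f_t(z)\bigl(1+\epsilon v_2 f_t(z-\epsilon v_1)\bigr)-f_t(z-\epsilon v_1)=\epsilon\,G_t(z)+\epsilon^2 R_\epsilon^t(z),
\]
with $R_\epsilon^t$ holomorphic in $z$ and uniformly bounded in $\epsilon$ and $t$; a second application of Hurwitz, this time in the parameter $\epsilon$, produces a zero of $H_\epsilon^t$ near $z_t$ for every small $\epsilon$ and every plaque $\Gamma_t$ in the box, giving $\Gamma_t\cap\Gamma_t^\epsilon\ne\emptyset$.

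The main technical difficulty is the uniformity of the two Hurwitz arguments in the transversal parameter $t$, which I handle by a Rouch\'e-type estimate on a small disk around $z_0$ where $G_q$ is bounded away from zero on the boundary, followed by continuous dependence in $t$ and $\epsilon$. Compactness of $\CP^1\times[1:0]\setminus U_0$ then packages the local flow boxes into a finite cover sharing the common threshold $\epsilon_1$, concluding the proof.
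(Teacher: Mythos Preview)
Your argument is correct and follows essentially the same route as the paper's proof: work in the chart $\psi_3$, split points of $\{w=0\}\setminus U_0$ into those where the plaque is tangent to the horizontal motion (horizontal flow boxes $W^a$) and those where it is transversal (vertical flow boxes $W^t$), apply Hurwitz to the family $\epsilon^{-1}\bigl(f_t(z)(1+\epsilon v_2 f_t(z-\epsilon v_1))-f_t(z-\epsilon v_1)\bigr)\to v_1 f_t'(z)+v_2 f_t(z)^2$ in the tangential case, and use the reverse triangle inequality in the transversal case. You are in fact more explicit than the paper on two points: you justify why the limit function $G_q$ is not identically zero (otherwise the plaque through $q$ would coincide with $\CP^1\times[1:0]$, producing a closed directed current), and you spell out the Rouch\'e-type uniformity in the transversal parameter that makes the Hurwitz step work for all plaques in the box simultaneously; the paper only alludes to these (``finite number of zeroes'', ``so do nearby enough plaques'').
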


\begin{proof}
In order to prove this lemma we use $\psi_3$. In this chart, an automorphism behaves as $\Phi_\epsilon(z,w)=(z+\epsilon v_1,\frac{w}{1+\epsilon v_1 w})$. It is a horizontal translation in $w=0$. We want to cover the points of $w=0$ which are not in $U_0$. It is a compact set, so we will find a finite covering.\par
If $q$ is a point with horizontal tangent, we take a horizontal flow box centered at $q$ where $f'_0(z)=0$ if and only if $z=0$. We will proof that for $\epsilon$ small enough, $\Gamma_0$ and $\Gamma_0^\epsilon$ intersect each other and by Hurwitz's theorem (see for example Conway's book \cite{Co}) we can find a flow box centered at $q$ verifying this for every plaque in it.\par
We can write $\Gamma_0=\{(z,f_0(z)),\, z\in \Delta_{\delta'}\}$ with $f_0(0)=0$ and $f'(0)=0$ and $\Gamma_0^\epsilon=\{(z+\epsilon v_1,\frac{f_0(z)}{1+\epsilon v_2f_0(z)}),\, z\in \Delta_{\delta'}\}$, so we want to compute if the function 
$$f_0(z)-\frac{f_0(z-\epsilon v_1)}{1+\epsilon v_2 f_0(z-\epsilon v_1)}$$ has any zero. The number of zeros of that function is the same as the number of zeros of 
\begin{align*}
g_\epsilon(z)=&\frac{1}{\epsilon} \left( f_0(z)-\frac{f_0(z-\epsilon v_1)}{1+\epsilon v_2 f_0(z-\epsilon v_1)} \right)\\
=&\frac{1}{\epsilon}\left( f_0(z)-f_0(z-\epsilon v_1)-\frac{f_0^2(z-\epsilon v_1)\epsilon v_2}{1+\epsilon v_2 f_0(z-\epsilon v_1)} \right).
\end{align*}
Then, $\lim_{\epsilon\rightarrow 0}g_\epsilon(z)=f'_0(z)v_1-f_0^2(z)v_2$ which has a finite number of zeroes in $\Delta_\delta$. By Hurwitz's theorem again, there is $\epsilon_1$ such that if $|\epsilon|<\epsilon_1$, $g_\epsilon(z)$ has the same number of zeros as the limit, hence $\Gamma_0^\epsilon$ and $\Gamma_0$ intersect each other. So do nearby enough plaques. We cover these points by flow boxes $W_{j_W}^a$.\par

Now, if $q$ is a non horizontal point in $w=0$, we can take a vertical flow box around it $(z+f_z(w),w)$ and $\Gamma_z^\epsilon=(z+\epsilon v_1+f_z(w),\frac{w}{1+\epsilon v_2 w})$.
If $\max d_w(\Gamma_z,\Gamma_{z'}^\epsilon)<|v_1\epsilon|/2$, then
$$\min d_w(\Gamma_{z},\Gamma_{z'})\geq \min d_w(\Gamma_{z},\Gamma_{z'}^\epsilon)-\max d_w(\Gamma_{z'},\Gamma_{z'}^\epsilon)=|\epsilon v_1|-|v_1\epsilon|/2>|\epsilon v_1|/2.$$
In this way we obtain the flow boxes $W_{i_W}^t$.\par
So, finally, we can cover $\{w=0\}\setminus U_0$ by a finite number of flow boxes.
\end{proof}

We can cover $[1:0]\times\CP^1$ analogously and obtain the same result for open sets $V_{i_V}^t$ and $V_{j_V}^a$.
 
 \begin{lemma}\label{lema2}
There is a covering of $[1:0]\times\CP^1\setminus U_0$ by flow boxes of two different types, $V_{j_V}^a$ and $V_{i_V}^t$ and an $\epsilon_2>0$, 
verifying that for every $\epsilon$ such that $|\epsilon|<\epsilon_2$,  
\begin{itemize}
\item if $\Gamma_z$ is a plaque in $V_{j_V}^a$ then $\Gamma_z^\epsilon\cap \Gamma_z \neq \emptyset$; 
\item if $\Gamma_{w}$ and $\Gamma_{w'}$ are plaques in $V_{i_V}^t$ satisfying that $\max d_z(\Gamma_{w},\Gamma_{w'}^\epsilon)<\frac{|v_2||\epsilon|}{2}$ then $\min d_z(\Gamma_{w},\Gamma_{w'})>\frac{|v_2||\epsilon|}{2}$.
\end{itemize}
\end{lemma}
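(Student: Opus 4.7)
The plan is to mimic the proof of Lemma \ref{lema1} with the roles of the two coordinates (and of $v_1,v_2$) interchanged. First I would pass to the chart $\psi_2$, in which $[1:0]\times\CP^1$ becomes the vertical axis $\{z=0\}$ and a direct calculation shows the family of automorphisms takes the form
\[
\Phi_\epsilon(z,w)=\left(\frac{z}{1+\epsilon v_1 z},\; w+\epsilon v_2\right);
\]
on $\{z=0\}$ this is the pure vertical translation $w\mapsto w+\epsilon v_2$. Since $\{z=0\}\setminus \psi_2^{-1}(U_0)$ is compact, it suffices to build a suitable flow box around each of its points and extract a finite subcover.

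The construction splits into two cases according to the tangent direction of $\LL$ at the chosen centre $q\in\{z=0\}\setminus U_0$. If the tangent at $q$ is vertical (the \emph{along} case), I would take a vertical flow box with plaques $(f_z(w),w)$, $z$ being the transversal coordinate, chosen so that $f_0(0)=0$ and $f_0'(w)=0$ only at $w=0$. A point in $\Gamma_0\cap\Gamma_0^\epsilon$ corresponds to a zero in $w$ of
\[
g_\epsilon(w)=\frac{1}{\epsilon}\left[f_0(w)-\frac{f_0(w-\epsilon v_2)}{1+\epsilon v_1 f_0(w-\epsilon v_2)}\right],
\]
whose limit $\lim_{\epsilon\to 0}g_\epsilon(w)=v_2 f_0'(w)+v_1 f_0(w)^2$ is a holomorphic function which is not identically zero (since $v_2\ne 0$ and $f_0\not\equiv 0$) and vanishes at $w=0$. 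Hurwitz's theorem then produces zeros of $g_\epsilon$ for all sufficiently small $\epsilon$, so $\Gamma_0\cap\Gamma_0^\epsilon\ne\emptyset$; a second application of Hurwitz to the continuous family of plaques propagates the conclusion to a whole flow box $V_{j_V}^a$ around $q$.

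If the tangent at $q$ is not vertical (the \emph{transversal} case), I would take a horizontal flow box centred at $q$ with plaques $(z,f_w(z))$, $w$ transversal. A short computation shows $d_z(\Gamma_w,\Gamma_w^\epsilon)=|\epsilon v_2|$ up to an $O(\epsilon z^2)$ perturbation arising from the first-coordinate correction $z\mapsto z/(1+\epsilon v_1 z)$, so after shrinking the flow box in the $z$-direction one can assume $\min_z d_z(\Gamma_w,\Gamma_w^\epsilon)\geq|\epsilon v_2|$. Combined with the hypothesis $\max d_z(\Gamma_w,\Gamma_{w'}^\epsilon)<|v_2\epsilon|/2$, the reverse triangle inequality then yields $\min d_z(\Gamma_w,\Gamma_{w'})>|v_2\epsilon|/2$, producing the flow box $V_{i_V}^t$. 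A finite subcover of $\{z=0\}\setminus U_0$ by these two kinds of boxes, together with $\epsilon_2>0$ chosen so that every box in the subcover satisfies the relevant bounds, completes the proof.

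The main obstacle is essentially bookkeeping: in the along case one must verify that the limit $v_2 f_0'(w)+v_1 f_0(w)^2$ is both non-trivial and vanishes at the centre, which follows from $v_2\ne 0$, the vertical-tangent condition $f_0'(0)=0$, and the non-triviality of the lamination; in the transversal case one must shrink the flow box enough that the first-coordinate perturbation contributes negligibly to the vertical displacement estimate. Both are routine adaptations of the corresponding steps in Lemma \ref{lema1}.
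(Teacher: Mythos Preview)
Your proposal is correct and is exactly the argument the paper intends: the paper does not give a separate proof of this lemma but simply states that one covers $[1:0]\times\CP^1$ ``analogously'' to Lemma~\ref{lema1}, and you have carried out precisely that analogy in the chart $\psi_2$ with the roles of $(z,w)$ and $(v_1,v_2)$ interchanged. One tiny quantitative slip: after shrinking in $z$ you only get $\min_z d_z(\Gamma_w,\Gamma_w^\epsilon)\geq(1-\eta)|\epsilon v_2|$ for arbitrarily small $\eta>0$, not $\geq|\epsilon v_2|$, so the final bound is $>(1/2-\eta)|v_2\epsilon|$ rather than $>|v_2\epsilon|/2$; this is harmless for the rest of the paper (only a positive constant $c_4$ is needed), and in fact the paper's own proof of Lemma~\ref{lema1} glosses over the same $O(\epsilon w^2)$ correction that you have made explicit.
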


Define $W:=\bigcup (W_{j_W}^a)\cup \bigcup (W_{i_W}^t)$ and $V:=\bigcup (V_{j_V}^a)\cup \bigcup (V_{i_V}^t)$.
\begin{lemma}\label{lema3}
There is a covering of $\CP^1\times\CP^1\setminus (U_0\cup V\cup W\cup A)$ by flow boxes of two different types, $B_{j_B}^a$ and $B_{i_B}^t$, and an $\epsilon_3>0$ such that if $|\epsilon| <\epsilon_3$,
\begin{itemize}
\item if $\Gamma_w$ is a plaque in $B_{j_B}^a$ then $\Gamma_w^\epsilon\cap\Gamma_w\neq\emptyset$;
\item if $\Gamma_z$ and $\Gamma_{z'}$ are plaques in $B_{i_B}^t$ satisfying $\max d_w(\Gamma_z,\Gamma_{z'}^\epsilon)<\frac{|\epsilon|}{2}$ then $\min d_w(\Gamma_z,\Gamma_{z'})>\frac{|\epsilon|}{2}$
\end{itemize}
\end{lemma}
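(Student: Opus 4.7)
The plan is to adapt the arguments of Lemmas \ref{lema1} and \ref{lema2} to the chart $\psi_1$, in which $\Phi_\epsilon$ acts as the honest translation $(z,w)\mapsto(z+\epsilon v_1,w+\epsilon v_2)$. The set $K:=\CP^1\times\CP^1\setminus(U_0\cup V\cup W\cup A)$ is compact and, since $U_0\cup V\cup W\cup A$ already covers the two lines $\CP^1\times[1:0]$ and $[1:0]\times\CP^1$ together with all singular neighbourhoods, $K$ sits inside $\psi_1(\CC^2)$, so we may work in affine coordinates. For each $q\in K$ we split into two cases according to whether the tangent to the leaf through $q$ is parallel to $(v_1,v_2)$.

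In the \emph{along} case we choose a horizontal flow box around $q$, writing plaques as $(z,f_w(z))$ with $q=0$ and $f_0'(0)=v_2/v_1$ (note $v_1\neq 0$ by condition (iii)). An intersection $\Gamma_0\cap\Gamma_0^\epsilon$ corresponds to a zero of
$$g_\epsilon(z):=\frac{1}{\epsilon}\bigl(f_0(z)-f_0(z-\epsilon v_1)-\epsilon v_2\bigr),$$
which converges uniformly on compacts, as $\epsilon\to 0$, to $v_1 f_0'(z)-v_2$. This limit vanishes at $z=0$ and is not identically zero: otherwise $f_0$ would be affine with slope $v_2/v_1$, its graph would extend to an invariant $\CP^1$ in $\CP^1\times\CP^1$, and the integration current on that projective line would be a directed closed current, contradicting the hypothesis. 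Hurwitz then supplies a zero of $g_\epsilon$ near $0$ for $|\epsilon|$ small, so $\Gamma_0\cap\Gamma_0^\epsilon\neq\emptyset$. Lipschitz variation of the $f_w$ in the transversal parameter and a second application of Hurwitz give a flow box $B_{j_B}^a$ in which every plaque meets its translate for $|\epsilon|$ below some threshold.

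In the \emph{transverse} case we choose a flow box around $q$ (vertical, say, with plaques $\Gamma_z=\{(\tilde{f}_z(w),w)\}$; the horizontal alternative is symmetric) in which $|v_1-v_2\tilde{f}_z'(w)|\ge c>0$ throughout. A Taylor expansion gives
$$d_w(\Gamma_{z'},\Gamma_{z'}^\epsilon)=\bigl|\epsilon v_1+\tilde{f}_{z'}(w-\epsilon v_2)-\tilde{f}_{z'}(w)\bigr|\ge\tfrac{c}{2}|\epsilon|$$
for $|\epsilon|$ small, and then the triangle inequality
$$\min_w d_w(\Gamma_z,\Gamma_{z'})\ge\min_w d_w(\Gamma_{z'},\Gamma_{z'}^\epsilon)-\max_w d_w(\Gamma_z,\Gamma_{z'}^\epsilon)$$
yields the required separation, after absorbing $c$ into the normalisation (shrinking each transverse box to make $c\ge 1$, or equivalently replacing the $|\epsilon|/2$ of the statement by $c|\epsilon|/2$ and taking $\epsilon_3$ smaller). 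Taking a finite subcover of $K$ by boxes of these two types and the minimum of the associated thresholds finishes the argument.

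The only delicate step is verifying that the limit $v_1 f_0'-v_2$ is not identically zero in the along case; this is where the no-directed-closed-current hypothesis enters, in the same spirit as when it was invoked in Section 2 to force each linearization $\phi_{i_A}$ at a hyperbolic singularity to be non-linear. Everything else is routine compactness and Hurwitz bookkeeping patterned directly on Lemmas \ref{lema1} and \ref{lema2}.
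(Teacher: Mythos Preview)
Your argument is essentially the paper's: work in the affine chart $\psi_1$, split into tangential and transversal points, use Hurwitz for the former and the triangle inequality for the latter, and finish by compactness. The one genuine difference is that the paper first applies the unitary rotation $R$ sending $(v_1,v_2)\mapsto(1,0)$, reducing to a horizontal translation; then for a transversal flow box (plaques written as $z'=g_{z'_0}(w')$) one has $d_{w'}(\Gamma_{z'_0},\Gamma_{z'_0}^\epsilon)=|\epsilon|$ \emph{exactly}, which is what produces the clean constant $|\epsilon|/2$ in the statement. Your direct computation without rotating only gives $d_w(\Gamma_{z'},\Gamma_{z'}^\epsilon)\gtrsim c|\epsilon|$ with $c=\inf|v_1-v_2\tilde f_z'(w)|$, and your suggestion to ``shrink the box to make $c\ge 1$'' does not work (shrinking does not change the value of $v_1-v_2\tilde f'$ at the centre). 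Your parenthetical alternative---carry a flow-box constant $c_{i_B}$ and absorb it into $c_4$ downstream---is correct and is all that the application in Theorem~\ref{teo1} needs, but the rotation is the cleaner route to the lemma \emph{as stated}. Your explicit justification that $v_1f_0'-v_2\not\equiv 0$ via the no-closed-current hypothesis is a point the paper leaves implicit (it is hidden in the reference to \cite{PG} and in the convention, already used in Lemma~\ref{lema1}, of choosing the along flow box so that the tangency at the centre is isolated).
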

\begin{proof}
We use $\psi_1$ because every point of $\CP^1\times\CP^1\setminus(U_0\cup W \cup V\cup A)$ is on its domain. In this chart, $\Phi_\epsilon$ works as a translation by the vector $(\epsilon v_1,\epsilon v_2)$, and there is a point on this open set $p'$ whose tangent space contains $(v_1,v_2)$.

We will do a change of coordinates just for simplicity. Let us consider the rotation $R:\CC^2\rightarrow\CC^2$ sending $(v_1,v_2)$ to $(1,0)$ and $(v_2,-v_1)$ to $(0,1)$. We have obtained new coordinates $(z',w')$ such that our family of automorphisms is a family of horizontal translations. Then, we can argue as we did in \cite{PG}. The reader can check the explicit computations there, but we include here an overview of the arguments for the convenience of the reader. Let $p_h$ be a point where the motion is tangent to its plaque at it. Then, applying Hurwitz's theorem in the same way, this plaque moved a little bit by the family of automorphisms intersects the original plaque. We cover this kind of points with flow boxes $B_{j_B}^a$. The rest of the points are transversal to the motions, hence they can be covered with flow boxes $B_{i_B}^t$.\par 
The estimates appearing in the statement for $B_{i_B}^t$ follow from Remark \ref{remark1} and the fact that $d_w(\Gamma_z,\Gamma_z^\epsilon)=\epsilon$. This finishises the proof of the lemma. 
\end{proof}

Although we have several types of flow boxes covering the lamination in $\CP^1\times\CP^1$, we can split them in three main types: flow boxes along the automorphisms, which are $W_{j_W}^a,V_{j_V}^a,B_{j_B}^a$, transversal to the automorphisms $W_{i_W}^t,V_{i_V}^t,$ $B_{i_B}^t,$ $U_0$ and a singular flow box $A_{i_A}$ for each singularity. We set $\epsilon_4=\min_{i=0\dots 3}\{\epsilon_i\}$ and $c_4=\min\{c_0,|v_1|/2,|v_2|/2,1/2\}$. Now we are ready to prove Theorem \ref{teoprinc} for $\mathcal{M}=\CP^1\times\CP^1$.

\begin{theorem} \label{teo1}Let $\LL$ be a minimal transversally Lipschitz lamination with only hyperbolic singularities in $\CP^1\times\CP^1$ and without directed closed currents. There are a covering of the lamination outside the singular neighbourhoods and an $N_0$ such that there are at most $N_0$ intersections between $\Gamma_1$ and $\Gamma_2^\epsilon$, where  $\Gamma_1$ and $\Gamma_2$ are plaques in the same flow box, for every $\epsilon$ with $|\epsilon|<\epsilon_4$.\end{theorem}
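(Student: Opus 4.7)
The approach I would take is the one sketched in the introduction: argue by contradiction, using the finite covering built above together with the transversal/tangential dichotomy of the flow boxes. I would assemble the full covering of the lamination as the union of the singular neighborhoods $A_{i_A}$, the fixed-point neighborhood $U_0$, and the flow boxes $W^{a/t}, V^{a/t}, B^{a/t}$ constructed in Lemmas \ref{lema1}, \ref{lema2} and \ref{lema3}. Setting $\epsilon_4 := \min_i \epsilon_i$ and $c_4 := \min\{c_0, |v_1|/2, |v_2|/2, 1/2\}$, the thresholds of all ``t''-type properties are aligned so that they can be triggered simultaneously once $|\epsilon|<\epsilon_4$. Now suppose for contradiction that for every $N$ there exist $|\epsilon|<\epsilon_4$, a non-singular flow box $F$ and plaques $\Gamma_1,\Gamma_2\subset F$ with $|\Gamma_1\cap\Gamma_2^\epsilon|>N$. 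Applying Lemma 4.2 of \cite{FS1}, exactly as was done in the $U_0$ lemma, yields $\max d(\Gamma_1,\Gamma_2^\epsilon)\leq c^N K|\epsilon|$ on $F$; for $N$ large this is strictly smaller than $c_4|\epsilon|$, and combined with $\max d(\Gamma_2,\Gamma_2^\epsilon)=O(|\epsilon|)$ from the smoothness of $\Phi_\epsilon$ it gives $\max d(\Gamma_1,\Gamma_2)=O(|\epsilon|)$.

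The next step is a case analysis on the type of $F$. If $F$ is of transversal type (or $F=U_0$), the closeness estimate puts the hypothesis of the corresponding ``t''-inequality in effect, producing $\min d(\Gamma_1,\Gamma_2)>c_4|\epsilon|$; pairing this with the exponentially sharp bound $\max d(\Gamma_1,\Gamma_2^\epsilon)\leq c^NK|\epsilon|$ through the triangle inequality gives a direct numerical contradiction, in the same spirit as the $U_0$ lemma. If $F$ is of ``a''-type, I use the minimality of $\LL$: follow the leaf through $\Gamma_2$, using Remark \ref{remark1} to propagate the estimate $d(\Gamma_1,\Gamma_2)=O(|\epsilon|)$ across finitely many flow-box transitions with a bounded Lipschitz amplification, until the leaf enters a transversal-type flow box, at which point the previous case applies. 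Following the introduction's alternative phrasing, one may equivalently push the leaf into an ``a''-type flow box $F^*$: the propagated estimate makes $\Gamma_1^*$ and $\Gamma_2^*$ shadow each other at distance $O(|\epsilon|)$, so that the defining equation of $\Gamma_1^*\cap\Gamma_2^{*,\epsilon}$ becomes a small holomorphic perturbation of a non-vanishing constant in $\epsilon$, and Hurwitz's theorem then forces some plaque in $F^*$ to miss its own $\Phi_\epsilon$-image, contradicting the defining ``a''-property of $F^*$.

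The main obstacle is calibrating $N_0$ against the multiplicative Lipschitz constants that accumulate when the closeness estimate is transported along the leaf-following route. Since the covering is finite and each transition introduces a bounded amplification from Remark \ref{remark1}, the total amplification is bounded, but $N_0$ has to be chosen large enough that the exponential factor $c^{N_0}$ from Lemma 4.2 of \cite{FS1} beats this amplification and still lands below $c_4|\epsilon|$ in the target flow box. A secondary delicate point is keeping the leaf away from the singular neighborhoods during the follow-through; since the singular $A_{i_A}$ are already taken care of by the results of \cite{FS3} and are excluded from the statement, one slightly shrinks the non-singular covering so that the propagation remains inside the remaining flow boxes.
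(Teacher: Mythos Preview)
Your overall architecture is right, but the case analysis is inverted and the claimed ``direct numerical contradiction'' in the transversal case does not exist. From many intersections you get $\max d(\Gamma_1,\Gamma_2^\epsilon)\le c^N K|\epsilon|$, and the $t$-property then gives $\min d(\Gamma_1,\Gamma_2)>c_4|\epsilon|$. But the only thing the triangle inequality produces from these two facts is a lower bound like $\min d(\Gamma_2,\Gamma_2^\epsilon)>(c_4-c^NK)|\epsilon|$, and in a transversal box $d(\Gamma_2,\Gamma_2^\epsilon)$ is of size $|v_1|\,|\epsilon|$ (respectively $|v_2|\,|\epsilon|$, $|\epsilon|$), while $c_4\le |v_1|/2$ by definition. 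So there is no contradiction at this stage; the $t$-property is an \emph{input}, not the endpoint.

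The actual contradiction always has to be cashed out in an $a$-type box. In the paper's argument, starting from a transversal box one propagates both the lower bound $d_{\min}(\Gamma_1,\Gamma_2)>c_4|\epsilon|$ and the upper bound $d_{\max}(\Gamma_1,\Gamma_2^\epsilon)<c^N A|\epsilon|$ along a bounded chain of at most $M$ flow boxes (using Remark~\ref{remark1} and Lemma~4.2.b of \cite{FS1}) into an $a$-type box, obtaining $d_{\min}(\Gamma_1',\Gamma_2')>c_4|\epsilon|/b^M$ and $d_{\max}(\Gamma_1',\Gamma_2'^{\,\epsilon})<b^M c^{N/2^M}A|\epsilon|$. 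For $N$ large the difference is positive, so $\Gamma_1'$ does not meet $\Gamma_1'^{\,\epsilon}$, contradicting the defining $a$-property. When one starts in an $a$-box, one first travels to a $t$-box to \emph{acquire} the lower bound and then returns; the contradiction is again obtained in the $a$-box. Your route ``$a\to t$, then apply the transversal case'' collapses because the transversal case on its own yields nothing, and your alternative Hurwitz sentence is also backwards: Hurwitz is what \emph{establishes} the $a$-property (plaques meet their own images), it cannot be used to force a miss.
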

\begin{proof}

For the sake of simplicity, throughout the proof we will denote by $d_{{max}}(\Gamma_1,\Gamma_2)$ the maximum of the transversal distances in a flow box between the plaques $\Gamma_1,\Gamma_2$ and $d_{{min}}(\Gamma_1,\Gamma_2)$ to the minimum.\par
By Lemma 4.2.a of \cite{FS1}, if $\Gamma_1$ $\Gamma^\epsilon_2$ are plaques in the same regular flow box which intersect each other in $N$ points, then the transversal distance satisfies that $d_{{max}}(\Gamma_1,\Gamma^\epsilon_2)<c^N|\epsilon|A$, for certain constants $c<1$ and $A>0$ not depending on the flow box. There exists $b>0$ such that the distortion of the transversal distance in a change of flow boxes is bounded from above by $b$ and by $1/b$ from below. This $b$ arises from combining the constant in Remark \ref{remark1} and the distortion of the distance when we change coordinates on the surface. Finally, there is also $M\in\NN$ holding that, for every plaque in a flow box along the motion, we can find a path from this plaque to a plaque in a flow box transversal to the motion passing through at most $M$ changes of flow boxes avoiding $A_{i_A}$ and $U_0$ (unless we have started in $U_0$). This number $M$ can also be chosen holding the same statement when starting from a flow box transversal to the motion and finishing in a tangential one.\par
Now, take $\Gamma_1$ and $\Gamma_2$ in a flow box transversal to the motion holding that $\Gamma_1$ and $\Gamma^\epsilon_2$ have $N$ intersection points for an $\epsilon$ with $|\epsilon|<\epsilon_4$. Hence $d_{{max}}(\Gamma_1,\Gamma^\epsilon_2)<c^NA|\epsilon|$. Consider a path as we said before joining this flow box transversal to the motion with another one along the motion, and let $\Gamma'_1$ and $\Gamma'_2$ be the corresponding continuation of the plaques. Then, by applying Lemma 4.2.b of \cite{FS1} when changing flow boxes, $d_{{max}}(\Gamma'_1,\Gamma^{\epsilon'}_2)<b^Mc^{N/2^M}|\epsilon|A$. Nevertheless, if $c^{N} A<c_4$ by the previous lemmas  $d_{{min}}(\Gamma_1,\Gamma_2)>c_4|\epsilon|$. Following the path we can also conclude that $d_{{min}}(\Gamma_1,\Gamma_2)>\frac{|\epsilon|c_4}{b^M}$. Then, $$d_{{min}}(\Gamma'_1,\Gamma^{\epsilon'}_1)>d_{{min}}(\Gamma'_1,\Gamma'_2)-d_{{max}}(\Gamma'_1,\Gamma^{\epsilon'}_2)\geq|\epsilon|\left(\frac{c_4}{b^M}-b^Mc^{N/2^M}A\right)$$
There is $N_1\in \NN$ such that if $N>N_1$, this last term is bigger than zero, but if this happens, it would mean that $\Gamma'_1$ and $\Gamma^{\epsilon'}_1$ do not have a common point. But they do if $|\epsilon|<\epsilon_4$. So $N$ cannot be arbitrarily large.

Now, we argue when we start in a flow box along the motion. Consider $\Gamma_1$ and $\Gamma_2$ in it such that $\Gamma_1$ and $\Gamma_2^\epsilon$ intersect each other at $N$ points. They also verify that $d_{{max}}(\Gamma_1,\Gamma^{\epsilon}_2)<c^N|\epsilon|A$. We construct a path to a transversal flow box, and we reach the continuation of the plaques $\Gamma_1'$ and $\Gamma_2'$. They hold that $d_{{max}}(\Gamma_1',\Gamma^{\epsilon'}_2)<Ab^Mc^{N/2^M}|\epsilon|$. Hence, there exists $N_2'\in\NN$ such that, if $N>N_2'$, then $c^{N/2^M} A b^M<c_4$. Therefore, by previous lemmas, $d_{min}(\Gamma_1',\Gamma'_2)>c_4 |\epsilon|$. We follow the path back to the original flow box and we get that
$d_{min}(\Gamma_2,\Gamma_2^\epsilon)>(c_4/b^M-c^N A)|\epsilon|$. So there is $N_2>N_2'$ holding that $c_4/b^M-c^N A>0$ for every $N>N_2$. But this would mean that there are no intersection points between $\Gamma_2$ and $\Gamma_2^\epsilon$. Same contradiction arises.\par
In order to obtain the $N_0$ in the statement, take $N_0=\max\{N_1,N_2\}$.\end{proof}
\end{section}
\begin{section}{Case of $\TT^1\times\CP^1$ and $\TT^2$}
These four different local behaviours we saw in the previous section describe also every behaviour appearing in the two remaining surfaces to be studied. So we just need to put them in the right situation. Let us begin with $\TT^1\times\CP^1$.\par
Let $\Pi_1:\TT^1\times\CP^1\rightarrow \TT^1$ and $\Pi_2:\TT^1\times\CP^1\rightarrow \CP^1$ be the projections on each factor and $\pi:\CC\rightarrow\TT^1$ is the canonical projection in $\TT^1$. Let $s_1,\dots,s_n$ be the singularities of the lamination. We can find an automorphism of $\TT^1\times\CP^1$ such that $\TT^1\times [1:0]$ does not contain any singularity, and an open simply connected relatively compact set $U$ of $\CC$, which is a neighbourhood of a fundamental domain for the equivalence relation definig $\TT^1$, containing only one preimage by $\pi$ of the singularities.\par
In this case, we are going to search for a family of automorphisms as 
$$\Phi_\epsilon( [z],[w_1:w_2]) =([z+v_1\epsilon],[w_1+\epsilon v_2w_2: w_2]).$$
So, in the chart $\psi_2(z,w)=([z],[w:1])$ the automorphisms act as translations by a vector $(\epsilon v_1,\epsilon v_2)$. Thus, if we choose $(v_1,v_2)$ satisfing the conditions \ref{cond1}),\ref{cond2}) and \ref{cond4}) we asked for in the case of $\CP^1\times\CP^1$, we can argue in a similar way: firstly, we need to cover $\TT^1\times[1:0]$ in a special way and then, the rest of the points are a compact set in the other chart where the automorphisms act as translations, so we can cover it as we did for $\CP^1\times\CP^1$.
\begin{lemma}
There is a covering of $\TT^1\times[1:0]$ by flow boxes of two different types,  $V_{j_V}^a$ and $V_{i_V}^t$ and an $\epsilon_0>0$, holding that if $|\epsilon| <\epsilon_0$, 
\begin{itemize}
\item if $\Gamma_z$ is a plaque in $V_{j_V}^a$ then $\Gamma_z^\epsilon\cap\Gamma_z\neq\emptyset$;
\item if $\Gamma_w$ and $\Gamma_{w'}$ are plaques in $V_{i_V}^t$ satisfying that $\max d_w(\Gamma_z,\Gamma_{z'}^\epsilon)<\frac{|v_1||\epsilon|}{2}$ then $\min d_w(\Gamma_z,\Gamma_{z'})>\frac{|v_1||\epsilon|}{2}.$
\end{itemize}
\end{lemma}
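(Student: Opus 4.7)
The plan is to follow the strategy of Lemma~\ref{lema1} inside the chart $\Psi:U\times\Delta_{\delta'}\to\TT^1\times\CP^1$, $\Psi(z,w)=([z],[1:w])$, where $U\subset\CC$ is the simply connected neighbourhood of a fundamental domain fixed at the beginning of this section. In this chart the family of automorphisms reads
$$\Phi_\epsilon(z,w)=\left(z+\epsilon v_1,\;\frac{w}{1+\epsilon v_2 w}\right),$$
so on the slice $\{w=0\}=\Psi^{-1}(\TT^1\times[1:0])$ it acts as the horizontal translation $z\mapsto z+\epsilon v_1$. This is exactly the local picture used in Lemma~\ref{lema1}, with the convenient difference that a translation by $\epsilon v_1$ on $\TT^1$ has no fixed point, so no analogue of the neighbourhood $U_0$ needs to be excised and the entire compact set $\TT^1\times[1:0]$ can be covered.

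At a point $q\in\TT^1\times[1:0]$ where $T_q\LL$ is horizontal, I would take a horizontal flow box centred at $q$ whose central plaque is the graph $w=f_0(z)$ with $f_0(z_0)=0$ and $f_0'(z_0)=0$. Exactly as in Lemma~\ref{lema1}, the points of $\Gamma_0\cap\Gamma_0^\epsilon$ are the zeros of
$$g_\epsilon(z)=\frac{1}{\epsilon}\left(f_0(z)-\frac{f_0(z-\epsilon v_1)}{1+\epsilon v_2 f_0(z-\epsilon v_1)}\right),$$
whose uniform limit on compact subsets of $U$ is $v_1 f_0'(z)-v_2 f_0(z)^2$. This limit vanishes at $z_0$, so Hurwitz's theorem yields a zero of $g_\epsilon$ for every sufficiently small $\epsilon$; continuity of the plaques in the transversal parameter then extends this to a whole flow box $V^a_{j_V}$ in which every plaque meets its $\Phi_\epsilon$-image.

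At a point where $T_q\LL$ is not horizontal, I would use instead a vertical flow box of the form $(z+f_z(w),w)$. A direct calculation shows that along the central slice $\{w=0\}$ one has $d_w(\Gamma_z,\Gamma_z^\epsilon)=|\epsilon v_1|$, and by continuity the box and $\epsilon_0$ can be shrunk so that $d_w(\Gamma_z,\Gamma_z^\epsilon)\ge(3/4)|\epsilon v_1|$ throughout, uniformly in the transversal parameter. The reverse triangle inequality then gives
$$\min d_w(\Gamma_z,\Gamma_{z'})\ge\min d_w(\Gamma_z,\Gamma_{z'}^\epsilon)-\max d_w(\Gamma_{z'},\Gamma_{z'}^\epsilon)>\frac{|v_1||\epsilon|}{2}$$
whenever $\max d_w(\Gamma_z,\Gamma_{z'}^\epsilon)<|v_1\epsilon|/2$, as required for $V^t_{i_V}$.

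Finally, $\TT^1\times[1:0]$ being compact, finitely many flow boxes of the two types just described cover it, and one takes $\epsilon_0$ to be the minimum of the finitely many thresholds produced. I do not foresee a serious obstacle here: the proof is essentially the transcription of Lemma~\ref{lema1} into the chart $\Psi$, the main genuine simplification being the absence of fixed points of $\Phi_\epsilon$ on $\TT^1\times[1:0]$. The only care needed is to make the Hurwitz limit argument uniform over the finitely many tangential boxes and to fix a single radius on which the constants of Remark~\ref{remark1} are controlled, both of which are routine bookkeeping.
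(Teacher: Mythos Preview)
Your proposal is correct and follows exactly the paper's approach: the paper's own proof consists of the single observation that in the chart $\psi_1(z,w)=([z],[1:w])$ the automorphism reads $\Phi_\epsilon(z,w)=(z+\epsilon v_1,\,w/(1+\epsilon v_2 w))$, which is the identical local situation treated in Lemma~\ref{lema1} (the paper cites Lemma~\ref{lema2}, the symmetric twin), so the same Hurwitz argument for tangential points and the same triangle-inequality estimate for transversal points apply verbatim. One cosmetic slip: in your displayed reverse triangle inequality the two difference terms are interchanged---it should read $\min d_w(\Gamma_z,\Gamma_{z'})\ge \min d_w(\Gamma_{z'},\Gamma_{z'}^\epsilon)-\max d_w(\Gamma_z,\Gamma_{z'}^\epsilon)$, and with that ordering (and tightening your $3/4$ to a bound arbitrarily close to $1$, which your shrinking argument already allows) you recover the stated constant $|v_1\epsilon|/2$.
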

\begin{proof}
We work with $\psi_1$. In this chart $\Phi_\epsilon(z,w)=(z+\epsilon v_1,\frac{w}{1+\epsilon v_1 w})$, hence it is a horizontal translation in $w=0$. Notice that this is the same situation we dealed with in Lemma \ref{lema2}, hence the proof is the same.\par
\end{proof}
We set $V=\bigcup V_{j_V}^a \cup \bigcup V_{i_V}^t$.
\begin{lemma}
There is a covering of $\TT^1\times\CP^1\setminus V$ by flow boxes of two different types, $B_{j_B}^a$ and $B_{i_B}^t$, and an $\epsilon_1>0$ such that if $|\epsilon| <\epsilon_1$
\begin{itemize}
\item if $\Gamma_w$ is a plaque in $B_{j_B}^a$ then $\Gamma_w^\epsilon\cap\Gamma_w\neq\emptyset$; 
\item if $\Gamma_z$ and $\Gamma_{z'}$ are plaques in $B_{i_B}^t$ satisfying that $\max d_w(\Gamma_z,\Gamma_{z'}^\epsilon)<\frac{|\epsilon|}{2}$ then $\min d_w(\Gamma_z,\Gamma_{z'})>\frac{|\epsilon|}{2}.$
\end{itemize}
\end{lemma}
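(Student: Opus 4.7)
The plan is to mirror the argument of Lemma \ref{lema3} from the previous section. I would work in the chart $\psi_2$, in which $\Phi_\epsilon$ acts as the translation $(z,w)\mapsto (z+\epsilon v_1, w+\epsilon v_2)$. Since $V$ is an open neighborhood of the compact set $\TT^1\times [1:0]$, its complement $\TT^1\times \CP^1\setminus V$ is compact, so it suffices to produce a suitable flow box around each point and then extract a finite subcover. Exactly as in Lemma \ref{lema3}, I would first precompose with a $\CC$-linear rotation $R$ of $\CC^2$ that sends $(v_1,v_2)$ to $(1,0)$; after this change of coordinates $\Phi_\epsilon$ becomes the horizontal translation $(z',w')\mapsto (z'+\epsilon, w')$, so the classification of points into ``along'' and ``transversal'' reduces to whether the leaf tangent is horizontal or not.

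At a point $q$ whose plaque is tangent to the direction $(1,0)$, I would center a horizontal flow box at $q$ with plaques $\{(z',\,w_0+f_{w_0}(z'))\}$ satisfying $f_0'(0)=0$. Intersection points of $\Gamma_0$ and $\Gamma_0^\epsilon$ are then zeros of the function $\epsilon^{-1}\bigl(f_0(z')-f_0(z'-\epsilon)\bigr)$, which converges uniformly on compacts to $f_0'(z')$ as $\epsilon\to 0$. Because $f_0'$ has only finitely many zeros in a disk around $0$, Hurwitz's theorem applied as in Lemma \ref{lema1} produces an intersection point for every sufficiently small $\epsilon$, and the continuous dependence of the plaques on the transversal parameter extends this intersection property to all nearby plaques. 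These flow boxes play the role of the $B_{j_B}^a$.

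At a point $q$ whose plaque is not tangent to $(1,0)$, I would center a vertical flow box $\{(z_0+f_{z_0}(w'),w')\}$, so that a plaque and its $\Phi_\epsilon$-image differ by a horizontal translation of modulus exactly $|\epsilon|$, i.e.\ $d_{w'}(\Gamma_{z_0},\Gamma_{z_0}^\epsilon)=|\epsilon|$. The triangle inequality then gives
$$\min d_{w'}(\Gamma_{z},\Gamma_{z'})\geq \min d_{w'}(\Gamma_{z'},\Gamma_{z'}^\epsilon)-\max d_{w'}(\Gamma_{z},\Gamma_{z'}^\epsilon)>|\epsilon|-\tfrac{|\epsilon|}{2}=\tfrac{|\epsilon|}{2},$$
which is the required transversal estimate; these flow boxes play the role of the $B_{i_B}^t$. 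Compactness of $\TT^1\times \CP^1\setminus V$ lets me extract a finite subcover, and taking $\epsilon_1$ to be the minimum of the thresholds produced in the two previous steps finishes the argument.

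I do not expect serious new obstacles: the only point to double-check is that the Lipschitz constants of the plaque parametrizations and the distortion introduced by the rotation $R$ and by passing from $\psi_2$ to the rotated coordinates remain uniformly controlled, but this is automatic on a compact set and is exactly the type of uniformity underlying Remark \ref{remark1}. The substantive geometric content, Hurwitz in the tangential case and the triangle inequality in the transversal case, is inherited verbatim from the $\CP^1\times \CP^1$ argument.
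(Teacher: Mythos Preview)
Your proposal is correct and follows exactly the approach the paper intends: the paper's own proof simply notes that in the chart $\psi_2$ the automorphism acts as a translation and refers back to Lemma~\ref{lema3}, which is precisely the argument you spell out. Your added details (rotation $R$, Hurwitz in the tangential case, triangle inequality in the transversal case, compactness for the finite subcover) are a faithful unpacking of that reference.
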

The behaviour in the chart given by $\psi_2$ is a translation, so the proof is the same than Lemma \ref{lema3}. Setting $\epsilon_\LL=\min\{\epsilon_1,\epsilon_2\}$, both lemmas together let us prove Theorem \ref{teoprinc} for $\mathcal{M}=\CP^1\times\TT^1$ by the same reasoning of Theorem \ref{teo1}.

Finally, we deal with the case of $\TT^2$. Let $\Lambda$ be a lattice in $\CC^2$, and let $\pi:\CC^2\rightarrow\CC^2/\Lambda=\TT^2$ be the canonical projection. If $\LL$ is a minimal lamination with hyperbolic singularities embedded in $\TT^2$, we can consider a relatively compact simply connected open neighbourhood $U$ of $(0,0)$  in $\CC^2$ covering a fundamental domain of the equivalence relation defining $\TT^2$, and containing only one preimage of the singularities inside it and no one on its boundary. The family of automorphisms  we will consider is $\Phi_\epsilon[(z,w)]=[(z+\epsilon v_1,w+\epsilon v_2)]$, with $(v_1,v_2)$ chosen as before. $\Phi_\epsilon$ lifts to a translation $\tilde{\Phi}_\epsilon:\CC^2\rightarrow \CC^2$. We can argue as we did in Lemma \ref{lema3} and we get Theorem \ref{teoprinc} when $\mathcal{M}=\TT^2$ in the same way we proved Theorem \ref{teo1}.

\end{section}
\begin{section}*{Acknowledgements}
This article will be part of my Ph.D. thesis directed by John Erik Fornæss and Luis Giraldo. Most of it was carried out during a visit to Prof. Fornæss at the Norwegian University of Sciences and Technology (Trondheim, Norway). I would like to thank him for his enlightening comments and to the institution for its hospitality and financial support. It is also my pleasure to thank to Prof. Giraldo for his generous help, useful advices and constant encouragement. Finally, I also thank to the referee for pointing out some mistakes appearing in the previous version, as well as for several useful comments and suggestions that helped to improve the exposition.\par
Partially supported by Ministerio de Ciencia e Innovación (Spain), MTM2011-26674-C02-02. The author benefits from a predoctoral grant of Universidad Complutense de Madrid, which also partially supported the stay at Trondheim.
\end{section}
\bibliographystyle{plain}
\bibliography{mybib}
\begin{flushleft}
C. Pérez-Garrandés \\
Instituto de Matemáticas Interdisciplinar (IMI)\\
Departamento de Geometría y Topología\\
Facultad de Ciencias Matemáticas\\
Universidad Complutense de Madrid\\
              Plaza de las Ciencias 3\\
              28040 Madrid\\
           carperez@ucm.es\\
          \end{flushleft}
\end{document}